\DeclareMathOperator{\tw}{tw}
\DeclareMathOperator{\N}{N}
\def\!{\mskip-\thinmuskip}
\newcommand{\pur}[1]{{\color{purple} #1}}
\theoremstyle{plain}
	\theoremstyle{plain}
 	\newtheorem{thm}{Theorem}[section]
	\theoremstyle{plain}
 	\newtheorem{prop}[thm]{Proposition}
	\theoremstyle{plain}
 	\newtheorem{cor}[thm]{Corollary}
	\theoremstyle{plain}
\def
	\theoremstyle{definition}
	\theoremstyle{definition}
	\theoremstyle{definition}
		\theoremstyle{definition}
        \newtheorem{eg}[thm]{Example}
		\theoremstyle{remark}
        \newtheorem{rem}[thm]{Remark}
  \newtheorem{dfn}[thm]{Definition}
\newcommand{\Z}{{\mathbb Z}}
\newcommand{\supp}{\operatorname{supp}}
\newcommand{\KK}{{\text{KK}}}
\newcommand{\ufv}{\text{ex}}
\newcommand{\fv}{{\text{fr}}}
\newcommand{\bi}{\mathbf{i}}
\newcommand{\ubi}{\underline{\bi}}
\newcommand{\bh}{\mathbf{h}}
\newcommand{\ubh}{\underline{\bh}}
\newcommand{\NN}{\mathbb{N}}
\newcommand{\frC}{\mathfrak{C}}
\newcommand{\frc}{\mathfrak{c}}
\newcommand{\ta}{\widetilde{a}}
\newcommand{\tb}{\widetilde{b}}
\renewcommand{\tw}{\operatorname{tw}}
\newcommand{\supp}{\operatorname{supp}}
	\newcommand{\tw}{{\opname tw}}
    \renewcommand{\G}{{\mathbb G}}
   \newcommand{\G}{{\mathbb G}}
    \renewcommand{\C}{{\mathbb C}}
   \newcommand{\C}{{\mathbb C}}
\title{From $\mathbf{i}$-boxes to signed words}
\author{Alessandro Contu}
\address{Universit\'{e} Paris Cit\'{e}\\ France}
\email{alessandro20.contu@gmail.com}
\author{Fan Qin}
\address{Beijing Normal University \\ China}
\email{qin.fan.math@gmail.com}
\author{Qiaoling Wei}
\address{Capital Normal University \\ China}
\email{wql03ster@gmail.com}
\thanks{}
\begin{document}

        \begin{abstract}	
The combinatorics of $\mathbf{i}$-boxes has recently been introduced by Kashiwara--Kim--Oh--Park in the study of cluster algebras arising from the representation theory of quantum affine algebras. In this article, we associate to each chain of $\mathbf{i}$-boxes a signed word, which canonically determines a cluster seed following Berenstein--Fomin--Zelevinsky. By bridging these two different languages, we are able to provide a quick solution to the problem of explicit determining the exchange matrices associated with chains of $\mathbf{i}$-boxes. 
\end{abstract}

\maketitle
 
	\setcounter{tocdepth}{1} \tableofcontents{}
	
\section{Introduction}	\label{intro}

\subsection{Backgrounds}
Cluster algebras were introduced by Fomin and Zelevinsky \cite{fomin2002cluster}. These algebras possess distinguished elements called cluster monomials. One of the main reasons for the interest in the theory of cluster algebras is their unexpected emergence across diverse areas of mathematics. An illustrative case is the representation theory of quantum affine algebras, where cluster algebra structures are studied through the framework of monoidal categorification, cf. the inspiring work of Hernandez--Leclerc \cite{HernandezLeclerc09}. A monoidal category $\mathscr{C}$ is a monoidal categorification of a cluster algebra $\mathcal{A}$ if there is an isomorphism between the Grothendieck ring of $\mathscr{C}$ and $\mathcal{A}$, such that the cluster monomials of $\mathcal{A}$ correspond to certain simple objects of $\mathscr{C}$. 

Let $\mathfrak{g}$ be a simple complex finite-dimensional Lie algebra. In 2023, Kashiwara--Kim--Oh--Park \cite{KKOP_2024_monoidalII} defined certain monoidal Serre subcategories $\mathscr{C}^{[a,b],\mathcal{D}_\mathcal{Q},\underline{w}_0}$ of the module category of the quantum affine algebra of $\mathfrak{g}$, where $[a,b]$ denotes a possibly unbounded integer interval (for more details, see \cite[\S 4, \S 6]{KKOP_2024_monoidalII}). In order to show that these categories are instances of monoidal categorification, Kashiwara--Kim--Oh--Park introduce the combinatorics of \emph{chains of} $\mathbf{i}$-\emph{boxes} (cf. \cite[\S 4, \S 5]{KKOP_2024_monoidalII}). A chain of $\mathbf{i}$-boxes is a sequence of integer intervals satisfying certain technical conditions (see Section \ref{section_iboxes}). In particular, the definition of $\mathbf{i}$-boxes is based on the choice of an infinite sequence of indexes $\mathbf{i}$. Let us fix a category $\mathscr{C}^{[a,b],\mathcal{D}_\mathcal{Q},\underline{w}_0}$. In this setting, the sequence $\mathbf{i}$ is of the form $\underline{\widehat{w}}_0$, a particular sequence whose elements belong to the index set of a simply laced Dynkin diagram canonically associated to $\mathfrak{g}$. Kashiwara--Kim--Oh--Park associate an affine determinant module $M(\mathfrak{c})$ (a generalization of the Kirillow-Reshetikhin modules) to each $\mathbf{i}$-box $\mathfrak{c}$. Moreover, for each chain of $i$-boxes $\mathfrak{C}=(\mathfrak{c}_i)$, they show the existence of a skew-symmetric exchange matrix $B(\mathfrak{C})$ such that, together with the representatives $([M(\mathfrak{c})_i)])_i$ of the modules associated to the $\mathbf{i}$-boxes of the chain, they give a seed for a cluster algebra structure of the Grothendieck ring of $\mathscr{C}^{[a,b],\mathcal{D}_\mathcal{Q},\underline{w}_0}$ (cf. \cite[Thm. 8.1]{KKOP_2024_monoidalII}). 

More precisely, when $b$ is an integer, Kashiwara--Kim--Oh--Park start by explicitly providing the exchange matrix $B(\mathfrak{C}^{[a,b]}_-)$ associated to a specific chain of $i$-boxes, denoted $\mathfrak{C}^{[a,b]}_-$ (see Definition \ref{def:initial-box}), generalizing a construction of Hernandez--Leclerc \cite{HernandezLeclerc11}. They show that the matrix $B(\mathfrak{C})$ can be obtained from the matrix $B(\mathfrak{C}^{[a,b]}_-)$ through a sequence of mutations. The case $b=\infty$ is treated through a limit procedure. At the end of \cite{KKOP_2024_monoidalII}, they state the problem of finding an explicit formula for all the matrices $B(\mathfrak{C})$. See Remark \ref{rem:problem-monoidal} for more details of this problem in terms of monoidal categories.

This problem is natural and fundamental for understanding the cluster structures appearing in the representation theory of quantum affine algebras. Recently, two different solutions have been proposed:
\begin{itemize}
    \item The first author (cf. \cite{Contu_solutionmonoidalbyadditive}) proposed a solution translating the problem in a framework of additive categorification. Each exchange matrix $B(\mathfrak{C})$ is obtained as a sub-matrix of a square matrix $\overline{B}(\mathfrak{C})$,  which, starting from the explicitly given matrix $\overline{B}(\mathfrak{C}^{[a,b]}_-)$, can be expressed via a matrix multiplication (Palu's generalized mutation rule \cite{Palu08}): 
    \[ \overline{B}(\mathfrak{C}) = P \overline{B}(\mathfrak{C}^{[a,b]}_-) P^{-t},\]
    where $P$ is an invertible matrix obtained through the computation of indices of cluster-tilting objects of a suitable cluster category.
   
   \item In \cite{kashiwarakim2024exchangematricesiboxes}, Kashiwara--Kim work with sequences $\mathbf{i}$ taking values in the index set of a generalized Cartan matrix $C$ and with \emph{maximal commuting family} of $\mathbf{i}$-boxes. Each chain of $\mathbf{i}$-boxes forms a maximal commuting family. For any such family $\mathcal{F}$, they define an $\mathcal{F}\times\mathcal{F}$-skew-symmetrizable matrix $\widetilde{B}^\KK(\mathcal{F})$ (see \S \ref{subsection_comparison_matrices}). When $C$ is a symmetric Cartan matrix of finite type and $\mathbf{i}$ is of the form $\underline{\widehat{w}}_0$, each exchange matrix $B(\mathfrak{C})$ can be obtained as a sub-matrix $B^\KK(\mathfrak{C})$ of the matrix $\widetilde{B}^\KK(\mathfrak{C})$.
\end{itemize}
Notice that the solution to Kashiwara--Kim--Oh--Park's problem provided by the first-named author, although interesting for bridging monoidal and additive categorification of cluster algebras, is not as explicit, since it requires to perform a multiplication of matrices. Kashiwara--Kim provide a direct formula relying on monoidal categorification and elaborate combinatorial machinery.

\subsection{Main results}
In this work, we propose an alternative and straightforward solution to Kashiwara--Kim--Oh--Park's problem. The starting point is the combinatorics and the formalism of signed words. Recall that a signed word on an index set $I$ is a sequence whose elements are of the form $\varepsilon h$, where $\varepsilon$ is in $\{\pm 1\}$ and $h$ is in $I$. Assume that $I$ is the index set of a generalized Cartan matrix. To each signed word $\ubh$, one can associate a seed $\mathbf{t}(\ubh)$ following \cite{BerensteinFominZelevinsky05}, which plays an important role in studying the cluster structures on double Bott-Samelson cells \cite{shen2021cluster}\cite{qin2024infinite}. Let $B(\ubh)$ be the corresponding exchange matrix.
Assume that $\mathbf{i}$ takes value in $I$ and that $b$ is in $\mathbb{Z}$. For each chain of $\mathbf{i}$-boxes $\mathfrak{C}$ associated to $I$, we define algorithmically a skew-symmetrizable matrix $B(\mathfrak{C})$ in a similar fashion to \cite{KKOP_2024_monoidalII}. Moreover, using the indices of the $\mathbf{i}$-boxes of the chain, we define a signed word $\ubh(\mathfrak{C})$. Our main result states that the desired matrix  $B(\mathfrak{C})$ is given by the well-known matrix $B(\ubh(\mathfrak{C}))$.
\begin{thm}\label{thm:box-to-word}
    The matrix $B(\mathfrak{C})$ equals $B(\ubh(\mathfrak{C}))$.
\end{thm}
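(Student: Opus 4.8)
The plan is to compare the two explicit combinatorial recipes for the exchange matrices term by term, leveraging the fact that both $B(\mathfrak{C})$ and $B(\ubh(\mathfrak{C}))$ are defined algorithmically: the former by the $\mathbf{i}$-box mutation procedure of Kashiwara--Kim--Oh--Park (as adapted in this paper), the latter by the Berenstein--Fomin--Zelevinsky construction attached to a signed word. The matrices are indexed by the $\mathbf{i}$-boxes of the chain $\mathfrak{C}=(\mathfrak{c}_k)$, which under the assignment $\mathfrak{c}_k \mapsto \varepsilon_k h_k$ correspond bijectively to the letters of the signed word $\ubh(\mathfrak{C})$. So the statement reduces to checking that, for each ordered pair of indices $(k,l)$, the $(k,l)$-entry of $B(\mathfrak{C})$ agrees with the $(k,l)$-entry of $B(\ubh(\mathfrak{C}))$.

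First I would set up the dictionary carefully: recall the precise definition of $\ubh(\mathfrak{C})$ from the indices and the ``left/right'' type of each $\mathbf{i}$-box in the chain (the sign $\varepsilon_k$ recording whether $\mathfrak{c}_k$ was obtained by extending its predecessor on the left or on the right), and recall the precise entries of $B(\ubh)$ in the BFZ formalism — these are governed by the positions of consecutive occurrences of a given index $h$ in the word, together with the Cartan matrix entries $a_{h h'}$ for neighboring indices and the relative order of the relevant letters. Then I would run an induction on the length of the chain. Both constructions are built so that passing from a chain $\mathfrak{C}$ to the chain $\mathfrak{C}'$ obtained by adjoining one more $\mathbf{i}$-box corresponds, on the matrix side, to a single mutation (for $B(\mathfrak{C})$ this is exactly the mechanism by which KKOP obtain $B(\mathfrak{C})$ from $B(\mathfrak{C}^{[a,b]}_-)$; for $B(\ubh)$ it is the standard fact that appending a letter to a signed word mutates the associated seed at a prescribed vertex). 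The base case is a chain where the two recipes visibly coincide — e.g.\ the chain $\mathfrak{C}^{[a,b]}_-$, whose matrix $B(\mathfrak{C}^{[a,b]}_-)$ is explicit in \cite{KKOP_2024_monoidalII} and which corresponds to a signed word all of whose signs are equal, for which $B(\ubh)$ is the classical matrix of Hernandez--Leclerc \cite{HernandezLeclerc11}.

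The key steps, in order, are: (i) fix the bijection between $\mathbf{i}$-boxes of $\mathfrak{C}$ and letters of $\ubh(\mathfrak{C})$ and verify it is order-compatible in the sense needed below; (ii) prove the base case by directly matching $B(\mathfrak{C}^{[a,b]}_-)$ with the BFZ matrix of the corresponding all-positive (or all-negative) signed word; (iii) show that the elementary ``move'' relating two adjacent chains of $\mathbf{i}$-boxes — swapping which end gets extended, i.e.\ the basic chain-of-$\mathbf{i}$-boxes mutation — induces the same matrix mutation as the corresponding elementary move on signed words (the BFZ ``braid/sign-change'' move), applied at the vertex indexed by the affected $\mathbf{i}$-box; (iv) conclude by induction, using that any chain is connected to $\mathfrak{C}^{[a,b]}_-$ by a sequence of such moves. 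The main obstacle is step (iii): one must check that the combinatorics of how an $\mathbf{i}$-box changes its ``neighbors'' in the chain (which controls the off-diagonal entries of $B(\mathfrak{C})$) matches exactly how the corresponding letter changes its ``nearest same-index neighbors'' and ``interleaving opposite-index neighbors'' in the signed word (which controls $B(\ubh)$), and that the signs and the Cartan-matrix coefficients line up — in particular that the sign $\varepsilon_k$ of a letter corresponds correctly to the left/right type of the $\mathbf{i}$-box, and that the mutation vertex is the same on both sides. This is essentially a bookkeeping comparison of two local rules, but it requires being scrupulous about conventions (orientation of the word, sign conventions in $B(\ubh)$ versus $B(\mathfrak{C})$, and the direction of mutation) to avoid sign errors. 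Once the local compatibility is established the global statement is immediate from the inductive structure.
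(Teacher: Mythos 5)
Your proposal follows essentially the same route as the paper: a direct check that $B(\mathfrak{C}^{[a,b]}_-)$ coincides with $B(\ubh(\mathfrak{C}^{[a,b]}_-))$ (the all-equal-sign word), then the observation that a box move at $s$ corresponds to a flip of the signed word at $s$ (plus a left reflection when $s=1$), which induces the mutation $\mu_s$ when the two boxes share a color and the relabeling $\sigma_{s,s+1}$ otherwise, and finally induction along a chain transformation connecting $\mathfrak{C}^{[a,b]}_-$ to $\mathfrak{C}$ --- exactly the structure of Proposition \ref{prop_box_move=flip} and the paper's proof of Theorem \ref{thm:box-to-word}. The only inaccuracy is the aside that adjoining one more $\mathbf{i}$-box to a chain corresponds to a single mutation (neither construction works that way), but your key steps (ii)--(iv) supersede it with the correct induction on box moves, so the overall plan is sound.
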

Therefore, by applying our main result to the setting where the Cartan matrix is symmetric of finite type and $\mathbf{i}$ is of the form $\underline{\widehat{w}}_0$, we obtain a solution to the problem of Kashiwara–Kim–Oh–Park, via a translation from the combinatorics of signed words to that of $\mathbf{i}$-boxes.

The trace of the proof of Theorem \ref{thm:box-to-word} is the following:
\begin{enumerate}
    \item By direct comparison, we verify that the matrices $B(\mathfrak{C}^{[a,b]}_-)$ and $B(\ubh(\mathfrak{C}^{[a,b]}_-))$ are equal.
    \item Let $\mathfrak{C}'$ and $\mathfrak{C}$ be any chains related by a box move. We show that, if $B(\mathfrak{C})$ equals $B(\ubh(\mathfrak{C}))$, then $B(\mathfrak{C}')$ also equals $B(\ubh(\mathfrak{C}'))$.
\end{enumerate}

 Additionally, when $I$ is the set of indices of a generalized Cartan matrix, we directly verify that our matrix $B(\ubh(\mathfrak{C}))$ corresponds to Kashiwara--Kim's matrix $B^\KK(\mathfrak{C})$ (Proposition \ref{prop_matrices_equal}).

 Note that Theorem \ref{thm:box-to-word} implies that the matrix $B(\mathfrak{C})$ is independent of the choice of box moves from $\frC_-^{[a,b]}$ to $\frC$ (Corollary \ref{cor:indepedent}). It also determines matrices in the case $b=+\infty$, as colimits of the matrices in the cases $b\in \Z$ (Corollary \ref{cor:extension}, see also \cite{qin2024infinite}).

\subsection{Notations and conventions}\label{sec:notations}

Choose any finite subsets $K^{\ufv}\subset K$. For any $K\times K^{\ufv}$-matrix $Z=(Z_{ij})$ and any permutation $\sigma$ on $K$, we define the $K\times \sigma(K^{\ufv})$-matrix $\sigma Z$ such that $(\sigma Z)_{\sigma i,\sigma j}:=Z_{i,j}$. Let $\sigma_{j,j+1}$ denote the transposition $(j,j+1)$.

The matrix $Z$ is called an exchange matrix if $Z_{ik}\in \Z$ for $(i,k)\in K\times K^{\ufv}$ and, moreover, it is skew-symmetrizable, i.e., there exists a diagonal matrix $D=(D_{kk})_{k\in K^{\ufv}}$ with diagonal entries $D_{kk}\in \N_{>0}$, called a skew-symmetrizer, such that $D_{ii}Z_{ik}=-D_{kk}Z_{ki}$, $\forall i,k\in K^{\ufv}$.

Let $[\ ]_+$ denote $\max(\ ,0)_+$. Let $Z$ denote an exchange matrix. Following \cite{fomin2002cluster}, for any $k\in K^{\ufv}$, the mutation $\mu_k$
gives us a new exchange matrix $\mu_k Z$ such that
\[(\mu_k Z)_{ij}=\begin{cases}
        Z_{ij}+[Z_{ik}]_+[Z_{kj}]_+-[-Z_{ik}]_+[-Z_{kj}]_+ & \text{if }\ i,j\neq k,\\
        -Z_{ij} & \text{if }\ i=k\ \text{or }j=k.
    \end{cases}\]

\section{Combinatorics of $\mathbf{i}$-box}
\label{section_iboxes}
In this section, following \cite{KKOP_2024_monoidalII,kashiwarakim2024exchangematricesiboxes}, we recall the definition and the properties of $\mathbf{i}$-boxes.\\
For $a,b\in\mathbb{Z}\sqcup\{\pm\infty\}$,  we write $[a,b]$ for the integer interval
\[ [a,b]=\{k\in\mathbb{Z} \ |\ a\leq k\leq b\}.\]
The \emph{length} $l$ of an integer interval $[a,b]$ is defined as $l=\mathrm{max}(b-a+1,0)$.

Let $I$ be a finite set of indices and  $Z$ be an integer interval. We write $\mathbf{i}=(i_Z)_{k\in Z}$ for a sequence of elements of $I$ indexed by the elements of $Z$. For $s\in Z$ and $j\in I$, we will use the following notations:
\begin{align}\label{eq:i-word-symbols}
\begin{split}
s^+& =\text{min}(\{t\in Z\ |\ s<t,\ i_t=i_s\}\cup \{\infty\}), \quad   s^-=\text{max}(\{t\in Z\ |\ s>t,\ i_t=i_s\}\cup \{-\infty\}),\\
s(j)^\oplus&=\text{min}(\{t\in Z\ |\ s\leq t,\ i_t=j\}\cup \{\infty\}), \quad  s(j)^\ominus=\text{max}(\{t\in Z\ |\ s\geq t,\ i_t=j\}\cup \{-\infty\}).
\end{split}
\end{align}

\begin{dfn}[{\cite[\S 2.1]{kashiwarakim2024exchangematricesiboxes}\cite[\S 6.1]{qin2023analogs}}]\mbox{}\label{dfn:ibox}

A finite integer interval $[a,b]$ in $Z$ is an {\em $\mathbf{i}$-box} if $i_a=i_b$. 
For an $\mathbf{i}$-box $[a,b]$, we define its {\em color} $i([a,b])$ as $i([a,b])=i_a=i_b$ and its {\em $i$-cardinality} or {\em order} as the number of times that the index $i([a,b])$ appears in the sub-interval of $\mathbf{i}$ corresponding to $[a,b]$.
\end{dfn}

\begin{rem}
 The intervals in Definition \ref{dfn:ibox} are closely related to Kirillov-Reshetikhin modules of quantum affine algebras. They were called $\mathbf{i}$-boxes by \cite{KKOP_2024_monoidalII}, which focused on the case $I=I_\mathsf{g}$, $\mathbf{i}=\widehat{\underline{w}}_0$ and $Z=\mathbb{Z}$, where $I_\mathsf{g}$ is the set of Dynkin indices of a simply-laced Lie algebra $\mathsf{g}$ and $\widehat{\underline{w}}_0=(i_k)_{k\in \mathbb{Z}}$ is an infinite sequence obtained from a reduced expression $\underline{w}_0=s_{i_1}\dots s_{i_l}$ of the longest element of the Weyl group of $\mathsf{g}$ by extending the sequence $i_1,\dots,i_l$ via the rule
\[ i_{k+l}=i_k^*,\]
where $(-)^*$ is the involution on the index set $I_\mathsf{g}$ induced by $w_0(\alpha_i)=-\alpha_{{i}^*}$, for any simple root $\alpha_i$, $i \in I_\mathsf{g}$.
\end{rem}

For a finite interval $[a,b]$ in $Z$, we define the $\mathbf{i}$-boxes
\[ [a,b\}=[a,b(i_a)^\ominus] \text{  and  } \{a,b]=[a(i_b)^\oplus,b]. \]
In other terms, $[a,b\}$ and $\{a,b]$ are the largest $\mathbf{i}$-boxes contained in $[a,b]$ with color $i_a$ and $i_b$ respectively.
 When we want to emphasize that an $\mathbf{i}$-box is of color $j$, we use the notation $[a,b]_j$.

\begin{dfn}[{\cite[Def. 5.1]{KKOP_2024_monoidalII}}]
Let $l$ be in $\mathbb{N}\cup \{\infty\}$. A \textit{chain} of $\mathbf{i}$-boxes of \emph{length} $l$ is a sequence of $\mathbf{i}$-boxes $\mathfrak{C}=(\mathfrak{c}_k)_{1\leq k< l+1}$ satisfying the following conditions for any $1\leq s< l+1$:
\begin{itemize}
    \item[(i)] The union $[\widetilde{a}_s,\widetilde{b}_s]:=\bigcup_{1\leq k\leq s} \mathfrak{c}_k $ is an interval of length $s$;
    \item [(ii)] The $\mathbf{i}$-box $\mathfrak{c}_s$ is the largest $\mathbf{i}$-box of color $i(\mathfrak{c}_s)$ contained in the interval $\bigcup_{1\leq k\leq s} \mathfrak{c}_k $.
\end{itemize}
Note that condition (ii) implies $b_s\leq \tb_s<b_s^-$ and $a_s\geq \ta_s>a_s^-$. In addition, we have $[\ta_s,\tb_s]\subset [\ta_t,\tb_t]$ whenever $s<t$. We call the interval $\bigcup_{1\leq k< l+1} \mathfrak{c}_k $ the \emph{range} of the chain. For any  $1\leq s < l+1$, the sequence $(\mathfrak{c}_k)_{1\leq k\leq s}$ is a chain of $\mathbf{i}$-boxes, called a \textit{sub-chain} of $\mathfrak{C}$.
\end{dfn}

\begin{rem}[{\cite[\S 5]{KKOP_2024_monoidalII}}]
To each chain of $\mathbf{i}$-boxes $\mathfrak{C}=(\mathfrak{c}_k)_{1\leq k< l+1}$, we can bijectively associate a pair $(c, (E_k)_{1\leq k<l})$, where $c\in Z$ and each $E_k$ is a symbol in  $\{L,R\}$, in the following way: let $c$ be the integer such that $\{c\}=\mathfrak{c}_1$ and, for any $1\leq k<l$, set
\[
E_k=\begin{cases}
    L, & \text{if }  [\widetilde{a}_{k+1},\widetilde{b}_{k+1}]=[\widetilde{a}_k-1,\widetilde{b}_k],\\ 
    R, & \text{otherwise.}
\end{cases}
\]
In fact, given such a pair $(c, (E_k)_{1\leq k<l})$, the associated chain of $\mathbf{i}$-boxes $\mathfrak{C}$ can be recursively recovered as follows:
\begin{itemize}
    \item $\mathfrak{c}_1=\{c\}$;
    \item  for any $2\leq k<l+1$, we have
    \[ 
    \mathfrak{c}_k=\begin{cases}
        [\widetilde{a}_{k-1}-1,\widetilde{b}_{k-1}\}, &\text{if } E_{k-1}=L;\\
        \{\widetilde{a}_{k-1},\widetilde{b}_{k-1}+1], &\text{if } E_{k-1}=R.
    \end{cases}
    \]
\end{itemize}
We refer to $T=L$ (resp. $=R)$ as a \emph{left} (resp. \emph{right}) \emph{expansion operator} and to a sequence $(c, (E_k)_{1\leq k<l})$ as a \emph{rooted sequence of expansion operators}, see \cite{Contu_solutionmonoidalbyadditive}.

\end{rem}

\begin{dfn}\label{def:initial-box}
    Let $[a,b]$ be an integer interval with $a\leq b$, $a\in \mathbb{Z}\sqcup\{-\infty\}$ and $b\in \mathbb{Z}$. Denote its length by $l=b-a+1$. Following \cite{KKOP_2024_monoidalII}, we define $\mathfrak{C}_-^{[a,b]}$ as the chain of $\mathbf{i}$-boxes associated to the rooted sequence of expansion operators $(b,(E_k)_{1\leq k<{l}})$, where $E_k=L$ for any $k$. Explicitly, the $k$-th $\mathbf{i}$-boxes are $\mathfrak{c}_k=[b-k+1,b\}$, $\forall k\in [1,l]$.
\end{dfn}

\begin{dfn}[{\cite[\S 5]{KKOP_2024_monoidalII}}]
\label{de_box_move}
Let $\mathfrak{C}=(\mathfrak{c}_k)$ be a chain of $\mathbf{i}$-boxes of length $l\leq \infty$ corresponding to a pair $(c,(E_k)_{1\leq k<l})$. 
\begin{itemize}
\item[(i)] For $1 \leq s < l$, the $\mathbf{i}$-box $\mathfrak{c}_s$ is defined to be \textit{movable} if $s=1$ or  $s\geq 2$ and $E_{s-1}\neq E_s$.
\item[(ii)] For a movable $\mathbf{i}$-box $\mathfrak{c}_s$, the \textit{box move} at $s$, denoted by $\nu_s$, is the operation sending $\mathfrak{C}$ 
to the chain $\nu_s\mathfrak{C}$, whose associated pair $(c',E')$ is defined as follows:
\[
c'=\begin{cases}
c+1 & \text{if } s=1, E_1=R,\\
c-1 & \text{if } s=1, E_1=L,\\
c & \text{if } s>1,
\end{cases} \ \text{   and   }\
E'_k=\begin{cases}
    R & \text{if } E_k=L,\ k\in\{s-1,s\},\\
    L & \text{if } E_k=R,\ k\in\{s-1,s\},\\
    E_k & \text{if } k\notin\{s-1,s\}.
\end{cases}
\]
\item[(iii)] We call a finite composition of box moves a \textit{chain transformation}.
\end{itemize}
\end{dfn}

\begin{eg}
Let $I=\{1,2,3\}$ be the set of Dynkin indices of a simple Lie algebra of type $A_3$. Let $\underline{w}_0=s_1s_2s_3s_1s_2s_1$ be a reduced expression of the longest element Weyl group of type $A$ and let $\mathbf{i}$ be the sequence $\underline{\widehat{w}}_0$:
\[ \widehat{\underline{w_0}}=\dots \underbrace{ 1,\ 3,\ 2,\ 3,\ 1,\ 2,\ 3,\ 1}_{[-3,4]} \  
 \dots .\]
 The chain of $i$-boxes $\mathfrak{C}=(\mathfrak{c}_k)_{1\leq k\leq 8}$ of range $[-3,4]$ associated to the rooted sequence of expansion operators $(4,(L,L,L,L,L,L,L))$ is given by
  \begin{align*}
\mathfrak{c}_1&=[4]_1, &  \mathfrak{c}_2&=[3]_3, &\mathfrak{c}_3&=[2]_2, &\mathfrak{c}_4&=[1,4]_1,\\
\mathfrak{c}_5&=[0,3]_3, &  \mathfrak{c}_6&=[-1,2]_2, &\mathfrak{c}_7&=[-2,3]_3 &\mathfrak{c}_8&=[-3,4]_1.
\end{align*} 

Notice that the only movable $i$-box in the chain $\mathfrak{C}$ is $\mathfrak{c}_1$. The box move at 1 sends $\mathfrak{C}$ to the chain $\nu_1\mathfrak{C}=(\mathfrak{c}_k')_{1\leq k\leq 6}$ associated to the sequence $(3,(R,L,L,L,L,L,L))$:

  \begin{align*}
\mathfrak{c}'_1&=[3]_3, &  \mathfrak{c}'_2&=[4]_1, &\mathfrak{c}'_3&=[2]_2, &\mathfrak{c}'_4&=[1,4]_1,\\
\mathfrak{c}'_5&=[0,3]_3, &  \mathfrak{c}'_6&=[-1,2]_2, &\mathfrak{c}'_7&=[-2,3]_3 &\mathfrak{c}'_8&=[-3,4]_1.
\end{align*} 

Notice that the $\mathbf{i}$-box $\mathbf{c}'_2$ is movable and that the associated box move sends $\nu_1\mathfrak{C}$ to the chain to the chain associated to the sequence $(3,(L,R,L,L,L,L,L)))$. Iterating this process, we obtain that, through a composition of box moves, the chain $\mathfrak{C}$ is sent to the chain $\widetilde{\mathfrak{C}}=(\widetilde{\mathfrak{c}_k})_{1\leq k\leq 8}$ associated to the sequence $(3,(L,L,L,L,L,L,R))$:
  \begin{align*}
\widetilde{\mathfrak{c}}_1&=[3]_3, &  \widetilde{\mathfrak{c}}_2&=[2]_2, &\widetilde{\mathfrak{c}}_3&=[1]_1, &\widetilde{\mathfrak{c}}_4&=[0,3]_3,\\
\widetilde{\mathfrak{c}}_5&=[-1,2]_2, &  \widetilde{\mathfrak{c}}_6&=[-2,3]_3, &\widetilde{\mathfrak{c}}_7&=[-3,1]_1 &\widetilde{\mathfrak{c}}_8&=[-3,4]_1.
\end{align*} 
\end{eg}

\begin{rem}[{\cite[Lemma 5.10]{KKOP_2024_monoidalII}, \cite[Rem. 2.10]{Contu_solutionmonoidalbyadditive}}]
    Let $[a,b]$ be an integer interval with $b\in \mathbb{Z}$ and $a\in \mathbb{Z}\sqcup \{-\infty\}$. Then any two chains of $\mathbf{i}$-boxes of range $[a,b]$ are related by a chain transformation.
\end{rem}

\begin{rem}
\label{rem_color_exchange_op}
Let $\mathfrak{C}$ and $\mathfrak{C}'$ be two chain of $\mathbf{i}$-boxes. Assume that $\mathfrak{C}$ and $\mathfrak{C}'$ are related by a box move at $s\geq 1$. Then,

\begin{itemize}
    \item if $s\geq 2$, we have: 
\[
i(\frc_{s+1})=i(\frc'_{s}), \quad i(\frc_{s})=i(\frc'_{s+1})\quad \text{and} \quad i(\frc_{k})=i(\frc'_{k})\ \text{ for any } k\notin \{s,s+1\}. 
\]\
\item if $s=1$, we have 
\[ i(\frc_2)=i_{\frc'_1}, \quad i(\frc'_2)=i_{\frc_1}\quad \text{and} \quad i(\frc_{k})=i(\frc'_{k})\ \text{ for any } k\geq 3  \]
\end{itemize}

\end{rem}

\begin{dfn}[{\cite[Def. 2.13]{kashiwarakim2024exchangematricesiboxes}}]
Let $\mathfrak{C}=(\mathfrak{c}_k)_k$ be a chain of $\mathbf{i}$-boxes with associated sequence of extension operators $(E_k)_k$. For any $k$, the \emph{effective end} $z$ of the $\mathbf{i}$-box $\mathfrak{c}_k=[x,y]$ is defined as
\[
z=\begin{cases}
    y & \text{if } k=1 \text{ or } E_{k-1}=R,\\
    x & \text{if } k=1 \text{ or } E_{k-1}=L. 
\end{cases}
\]
\end{dfn}

\subsection{The matrix associated to a chain of $\mathbf{i}$-boxes} 
From now on, let $I$ be the set of indices of a generalized Cartan matrix $C=(c_{ij})_{i,j\in I}$. Let $[a,b]$ be an integer interval and let $l$ be its length. Let $\mathfrak{C}=(\mathfrak{c}_k)_{k\in [1,l]}$  be a chain of $\mathbf{i}$-boxes of range $[a,b]$. In this subsection, following \cite{KKOP_2024_monoidalII}, under the assumption that $b$ is an integer, we recursively associate to the chain $\mathfrak{C}$ an exchange matrix $B(\mathfrak{C})$. 

We introduce the following sets 
\begin{align*}
K(\mathfrak{C})&=\begin{cases}
    \{1,\dots, l\}, &\text{ if } l<\infty;\\
    \mathbb{N}_{\geq 1}, &\text{ if } l=\infty;
\end{cases}\\
K(\mathfrak{C})^{\text{fr}}&=\{s \in K(\mathfrak{C})\ |\ \mathfrak{c}_s=[a(i)^\oplus,b(i)^\ominus] \text{ for some } i \in I \};\\
K(\mathfrak{C})^{\text{ex}}&=K(\mathfrak{C})\backslash K(\mathfrak{C})^{\text{fr}}.
\end{align*}

For any $k\in K(\mathfrak{C})$, we define $k[1]$ as 
\[ k[1]=\mathrm{min}( \{k'\in [k+1,l]\ |\ \mathfrak{c}_{k'} \text{ has the same color of } \mathfrak{c}_k \}\sqcup \{+\infty\}.\]

Assume that $b$ is in $\mathbb{Z}$. Following \cite[\S 7.5]{KKOP_2024_monoidalII}, let $B(\mathfrak{C}_-^{[a,b]})=(b_{jk})_{j\in K(\mathfrak{C}),k\in K(\mathfrak{C})^{\text{ex}}}$ be the exchange matrix given by 

\[b_{jk}=
\begin{cases}
    1 & k=j[1],\\
    -1 & j=k[1],\\
    c_{i_{j},i_{k}}& j<k<j[1]<k[1],\\
    -c_{i_{j},i_{k}}& k<j<k[1]<j[1],\\
    0,& \text{otherwise.}
\end{cases}
\]
Although this construction was given in \cite{KKOP_2024_monoidalII} only for Cartan matrices of ADE type (see also Remark 2.12), we apply the same formula in the more general setting considered here.

Next, for any chain of $\mathbf{i}$-boxes $\mathfrak{C}=(\mathfrak{c}_k)_{k\in [1,l]}$ of range $[a,b]$, fix a sequence of box moves $\nu_1,\dots,\nu_N$ whose composition sends $B(\mathfrak{C}_-^{[a,b]})$ to $B(\mathfrak{C})$. For $0\leq s\leq N$, we write $\mathfrak{C}_s=(\mathfrak{c}^s_k)_{k\in [1,l]}$ for the chain of $i$-boxes \[\mathfrak{C}_s=\begin{cases} 
    \mathfrak{C}_-^{[a,b]} & \text{if } s=0,\\
    \nu_s\circ\dots\circ \nu_1 \mathfrak{C}_-^{[a,b]}& \text{otherwise }.
    \end{cases}
    .\]

Let $B_0$ be the matrix $B(\mathfrak{C}_-^{[a,b]})$ and, for any $1\leq s\leq N$, recursively define the exchange matrix $B_s$ as follows:

\begin{itemize}
    \item[(i)] If the $\mathbf{i}$-box $\mathfrak{c}^{s-1}_{s+1}$ has the same color as $\mathfrak{c}^{s-1}_s$, then
        \[B_s= \mu_s(B_{s-1}).\]

    \item[(ii)] If the $\mathbf{i}$-boxes $\mathfrak{c}^{s-1}_{s+1}$ and $\mathfrak{c}^{s-1}_s$ have different color, then 
    \[B_s= \sigma_{s,s+1}(B_{s-1}).\]
    
    \end{itemize}
Finally, we set $B(\mathfrak{C})=B_N$. In the next section, we will show that the matrix $B(\mathfrak{C})$ does not depend on the choice of the composition of box-moves sending $\mathfrak{C}_-^{[a,b]}$ to $\mathfrak{C}$

\begin{rem}\label{rem:problem-monoidal} Let $\mathfrak{g}$ be a finite-dimensional simple complex Lie algebra and let $\mathsf{g}$ be the Lie algebra of of simply-laced type associated to the unfolding of the Dynkin diagram of $\mathfrak{g}$ (cf. \cite{Fujita_Oh_2021}).
Assume that $b$ is in $\mathbb{Z}\cup\{\infty\}$ and that $\mathbf{i}=(i_k)_{k\in [a,b]}$ is a subsequence of the sequence $\widehat{\underline{w}}_0=(i_k)_{k\in \mathbb{Z}}$, for a certain reduced expression $\underline{w}_0$ of the longest element of the Weyl group of $\mathsf{g}$. In \cite{KKOP_2024_monoidalII}, Kashiwara--Kim--Oh--Park associate a \emph{monoidal seed}  (see \cite[Def. 7.2]{KKOP_2024_monoidalII} for the terminology) to each chain of $i$-boxes $\mathfrak{C}$. In particular, they define a family of \emph{commuting real prime} modules $M(\mathfrak{C})$ and show the existence of a companion exchange matrix $B(\mathfrak{C})$. In order to do this, when $b$ is an integer they use the above procedure (cf. \cite[Prop. 8.11]{KKOP_2024_monoidalII}) and, when $b=+\infty$ they define $B(\mathfrak{C})$ as a colimit of the matrices associated to the subchain of $\mathfrak{C}$ (by \cite[Prop. 7.14, Lem. 7.16]{KKOP_2024_monoidalII}, this operation is well defined). Moreover, it follows from their theory (cf. \cite[Prop. 7.14]{KKOP_2024_monoidalII}) that the exchange matrix $B(\mathfrak{C})$ does not depend on the choice of the sequence of box moves sending $\mathfrak{C}_{-}^{[a,b]}$ to $\mathfrak{C}$. Nevertheless, they do not provide an explicit description of the coefficients of $B(\mathfrak{C})$, 
whose determination is stated as an open problem.

\end{rem}

\section{Signed words}

Recall that $C=(c_{ij})_{i,j\in I}$ is a generalized Cartan matrix. Let $l$ be in $\NN \sqcup \{+\infty\}$.
\begin{dfn}
A signed word on the index set $I$ is a sequence $\ubh=(\varepsilon_k h_k)_{1\leq k < l+1}$ such that, for any $k$, $\varepsilon_k\in \{\pm 1\}$ and $h_k\in I$. Denote $\bh_k=\varepsilon_k h_k$ and $|\bh_k|=i_k$.
\end{dfn}

We introduce the following sets 
\begin{align*}
K(\ubh)&=[1,l];\\
K(\ubh)^{\ufv}&=\{s \in K(\ubh)\ |\ \exists t\in [s+1,l],\ h_t=h_s\};\\
K(\ubh)^{\fv}&=K(\mathfrak{C})\backslash K(\ubh)^{\ufv}.
\end{align*}
For any $k\in K(\ubh)$, we define $k[1]$ as 
\begin{align}\label{eq:interval-shift}
    k[1]=\mathrm{min}( \{k'\in [k+1,l]\ |\ |h_{k'}|=|h_k| \}\sqcup \{+\infty\}.
\end{align} 
Note that $k[1]$ in \eqref{eq:interval-shift} and $s^+$ in \eqref{eq:i-word-symbols} should not be confused.

Following \cite{BerensteinFominZelevinsky05} \cite{shen2021cluster} \cite[(6.1)]{qin2023analogs}, for any signed word $\ubh$,
we define $\widetilde{B}(\ubh)=(\widetilde{b}_{jk})_{j,k\in K(\ubh)}$ as
\begin{align}
\widetilde{b}_{jk} & =\begin{cases}
\varepsilon_{k} & k=j[1]\\
-\varepsilon_{j} & j=k[1]\\
\varepsilon_{k}c_{|\bh_{j}|,|\bh_{k}|} & \varepsilon_{j[1]}=\varepsilon_{k},\ j<k<j[1]<k[1]\\
\varepsilon_{k}c_{|\bh_{j}|,|\bh_{k}|} & \varepsilon_{k}=-\varepsilon_{k[1]},\ j<k<k[1]<j[1]\\
-\varepsilon_{j}c_{|\bh_{j}|,|\bh_{k}|} & \varepsilon_{k[1]}=\varepsilon_{j},\ k<j<k[1]<j[1]\\
-\varepsilon_{j}c_{|\bh_{j}|,|\bh_{k}|} & \varepsilon_{j}=-\varepsilon_{j[1]},\ k<j<j[1]<k[1]\\
0 & \text{otherwise}
\end{cases}.\label{eq:dBS_B_matrix}
\end{align}
It has a skew-symmetrizer $\widetilde{D}(\ubh)$, which is a diagonal matrix with diagonal entries $\widetilde{D}_{ii}=h_i$. Let $B(\ubh)$ denote its $K(\ubh)\times K(\ubh)^{\ufv}$-submatrix.

From now on, we will always assume that $B(\ubh)$ is a locally finite matrix, i.e., for any $j$, only finitely many $b_{jk}$ are nonzero and, for any $k$, only finitely many $b_{jk}$ are nonzero. This assumption allows us to extend results in \cite{shen2021cluster} \cite{qin2023analogs} for $l\in \NN$ to the case $l=+\infty$ as in \cite{qin2024infinite}. 

Let $\ubh_{[j,k]}$ denote the sequence $(\bh_s)_{s\in [j,k]}$.
\begin{dfn}[{\cite[Section 2.3]{shen2021cluster},\cite[Section 3.2]{qin2024infinite}}]
    Let $\ubh=(\ubh_k)_{k\in [1,l]}$ be a signed word.
\begin{itemize}
    \item The \emph{left reflection} of $\ubh$ is the operation sending $\ubh$ to the signed word $\ubh'=(-\bh_1,\ubh_{[2,l]})$.
        \item Let $j$ be in $[1,l-1]$ such that $\ubh_j$ and $\ubh_{j+1}$ have different signs. Then the \emph{flip} of $\ubh$ at $j$ is the operation sending $\ubh$ to the signed word $\ubh'=(\ubh_{[1,j-1]},\bh_{j+1},\bh_j,\ubh_{[j+2,l
        ]})$ 
    \end{itemize}
\end{dfn}

\begin{prop}[{\cite[Prop. 3.7]{shen2021cluster},\cite[Section 3.2]{qin2024infinite}}]
Let $\ubh=(\ubh_k)_{k\in [1,l]}$ and $\ubh'=(\ubh_k')_{k\in [1,l]}$ be two signed words. 
\begin{enumerate}
    \item If $\ubh'$ is obtained from $\ubh$ via a left reflection, then $B(\ubh')=B(\ubh)$.
    \item If $\ubh'$ is obtained from $\ubh$ via a flip at $j$, then 
    \[B(\ubh')=\begin{cases}
        \sigma_{j,j+1}B(\ubh) & \text{if }\ h_j\neq h_j',\\
        \mu_j(B(\ubh)) & \text{if }\ h_j= h_j'.
    \end{cases}\]
\end{enumerate}
\end{prop}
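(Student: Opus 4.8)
The plan is to argue at the level of the full skew-symmetrizable matrix $\widetilde{B}(\ubh)$ of \eqref{eq:dBS_B_matrix}, passing to the $K(\ubh)^{\ufv}$-columns only at the end. The guiding principle is that each entry $\widetilde b_{jk}$ is determined by the function $k\mapsto k[1]$ (equivalently, by the family of color intervals $[k,k[1]]$), by the underlying colors $|\bh_k|$, and by the signs $\varepsilon_k$, together with the order relations among $j,k,j[1],k[1]$; each of the three moves alters only a small, controlled part of this data.

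\emph{Left reflection, and flip with $h_j\neq h_j'$.} A left reflection changes only $\varepsilon_1$, leaving all underlying colors --- hence $K(\ubh)^{\ufv}$ and the whole function $k\mapsto k[1]$ --- untouched. Since $1$ is the minimal index, $1=j[1]$ and $1=k[1]$ never occur, while the clauses ``$\varepsilon_j=-\varepsilon_{j[1]}$'' and ``$\varepsilon_{k[1]}=\varepsilon_j$'' with $j=1$ both require a strictly smaller index; running through the cases of \eqref{eq:dBS_B_matrix} one checks that $\varepsilon_1$ enters no nonzero entry of row or column $1$, so $\widetilde B(\ubh')=\widetilde B(\ubh)$ verbatim, hence $B(\ubh')=B(\ubh)$, proving (1). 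For a flip at $j$ with $h_j\neq h_j'$: position $j$ of $\ubh'$ carries the letter $\bh_{j+1}$, so the hypothesis is exactly $|\bh_j|\neq|\bh_{j+1}|$; then neither of $j$, $j+1$ is the next same-color occurrence of the other, transposing the two letters merely relabels the two color intervals they meet, and a direct case check of \eqref{eq:dBS_B_matrix} gives $\widetilde B(\ubh')=\sigma_{j,j+1}\widetilde B(\ubh)$, i.e. the first branch of (2).

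\emph{Flip with $h_j=h_j'$.} Here $|\bh_j|=|\bh_{j+1}|$ and $\varepsilon_j=-\varepsilon_{j+1}$, so $j[1]=j+1$. The crucial point is that positions $j$ and $j+1$ of $\ubh'$ still both carry the color $|\bh_j|$, so the function $k\mapsto k[1]$ and the set $K(\ubh)^{\ufv}$ (which contains $j$, so that $\mu_j$ makes sense) are unchanged; the only difference between $\ubh$ and $\ubh'$ is the transposition $\varepsilon_j\leftrightarrow\varepsilon_{j+1}$. I would verify $\widetilde B(\ubh')=\mu_j\widetilde B(\ubh)$ in two stages. First, row and column $j$: the only possibly nonzero entries of row $j$ are $\widetilde b_{j,j+1}=\varepsilon_{j+1}$, $\widetilde b_{j,p}=-\varepsilon_j$ with $p$ the unique index (if any) satisfying $p[1]=j$, and $\widetilde b_{j,k}=-\varepsilon_j\,c_{|\bh_j|,|\bh_k|}$ for $k<j$ with $k[1]>j+1$; in each case the relevant clause of \eqref{eq:dBS_B_matrix} is phrased through $j[1]=j+1$ and $\varepsilon_j=-\varepsilon_{j+1}$, both invariant under the transposition, so all of row $j$ gets negated, and since the flip preserves every underlying color the matrix $\widetilde B(\ubh')$ is still skew-symmetrizable by $\widetilde D(\ubh)$, which forces column $j$ to be negated as well --- this matches the $-\widetilde b$ clause of the mutation rule. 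Second, for $a,b\neq j$ the correction term $[\widetilde b_{aj}]_+[\widetilde b_{jb}]_+-[-\widetilde b_{aj}]_+[-\widetilde b_{jb}]_+$ is supported on pairs with $a,b\in\{p,\,j+1\}\cup\{k<j:k[1]>j+1\}$, and one computes $\widetilde b_{ab}$, $\widetilde b'_{ab}$, and the correction in each relative configuration of the color intervals $[a,a[1]]$ and $[b,b[1]]$ with respect to $[j,j+1]$ and to one another.

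The only genuine work is this last comparison, and I expect it to be the main obstacle. There are just a few configurations to treat --- $[a,a[1]]$ cannot lie strictly inside $[j,j+1]$, but may contain it, meet it in a single endpoint, or be disjoint from it, with $(j+1)[1]$ possibly $+\infty$ --- and in each one uses that the off-diagonal Cartan entries are $\le 0$, so that the $[\ ]_+$'s in the correction are forced, in order to confirm that the lone change in the sign pattern at the two ends of $[j,j+1]$ shifts $\widetilde b_{ab}$ by exactly the claimed amount ($0$, $\pm1$, or $\pm c_{|\bh_j|,|\bh_a|}$). The verification is mechanical and is in substance \cite[Prop. 3.7]{shen2021cluster}. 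Finally, restricting to the $K(\ubh)^{\ufv}$-columns yields the statement for $B(\ubh)$ when $l\in\NN$, and the standing local-finiteness hypothesis on $B(\ubh)$ lets one deduce the case $l=+\infty$ from finite truncations, exactly as in \cite{qin2024infinite}.
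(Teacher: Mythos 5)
The paper does not actually prove this proposition: it is recalled from the cited sources (Shen--Weng, Prop.~3.7, and Qin for the $l=+\infty$ extension), so there is no internal proof to match your argument against. Your outline follows the same direct route those references take, and the parts you do carry out are sound: the observation that $\varepsilon_1$ occurs in no clause of \eqref{eq:dBS_B_matrix} (since $1=j[1]$ or $1=k[1]$ is impossible and every clause involving $\varepsilon_j$ or $\varepsilon_k$ at position $1$ would force a smaller index) is a complete and clean proof of (1); the identification of the nonzero entries of row $j$ when $j[1]=j+1$, their negation under the flip, and the deduction of column $j$ from the common skew-symmetrizer are all correct; and the bookkeeping of $K^{\ufv}$ and of $k\mapsto k[1]$ under the two kinds of flips is right.

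The gap is that the substantive half of the mutation case is announced but not performed. For $a,b\neq j$ you must verify
\[
\widetilde b'_{ab}=\widetilde b_{ab}+[\widetilde b_{aj}]_+[\widetilde b_{jb}]_+-[-\widetilde b_{aj}]_+[-\widetilde b_{jb}]_+,
\]
with $a,b$ ranging over the ``neighbors'' of $j$ you listed (namely $j+1$, the index $p$ with $p[1]=j$ if it exists, and the $k<j$ with $k[1]>j+1$), and this is exactly where the sign clauses of \eqref{eq:dBS_B_matrix} interact nontrivially: whether the pair $(a,b)$ falls into the third/fourth or fifth/sixth clause before and after the flip depends on the relative position of $a[1],b[1]$ with respect to $j,j+1$ and on $\varepsilon_{a[1]},\varepsilon_{b[1]}$, and the correction term must be matched case by case (including the boundary cases $b=j+1$ with $(j+1)[1]=+\infty$, and $a=p$). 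Declaring this ``mechanical and in substance \cite[Prop.~3.7]{shen2021cluster}'' leaves the one step that carries the content of part (2) unverified; as written, your argument is a correct setup plus an appeal to the same citation the paper itself relies on, rather than an independent proof. Similarly, the transposition case ($h_j\neq h'_j$) is asserted via ``a direct case check'' without exhibiting it, though there the check really is a relabeling and the risk is low. To make this a self-contained proof you would need to write out the finitely many configurations in the mutation case explicitly, using $c_{|\bh_a|,|\bh_j|}\le 0$ to resolve the $[\;]_+$'s, or else state the result as quoted from the references, as the paper does.
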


\section{Comparison of matrices}
Recall that $I$ denotes the index set of a generalized Cartan matrix $C$. Let $[a,b]$ be an interval and $\mathbf{i}=(i_k)_{k\in [a,b]}$ a sequence of elements of $I$. In the following, we will consider chains of $\mathbf{i}$-boxes defined with respect to the sequence $\mathbf{i}$. Let $\mathfrak{C}=(\mathfrak{c}_k)_{k\in [1,l]}$ be a chain of $\mathbf{i}$-boxes on $[a,b]$ and let $(E_k)_{1\leq k<l}$ be the associated sequence of expansion operators. For any $1\leq k< l+1$, we write $\mathfrak{c}_{k}=[a_k,b_k]$. We associate to $\mathfrak{C}$ a signed word $\ubh(\mathfrak{C})=(\bh_k)_{1\leq k< l+1}$ as follows:

\begin{itemize}
    \item we set $\bh_1$ equal to the color $h_1$ of the $\mathbf{i}$-box $\mathfrak{c}_1$;
    \item for any $2\leq k<l+1$, we set $\bh_k=\varepsilon_k h_k$ where $h_k$ is the color of the $\mathbf{i}$-box $\mathfrak{c}_{k}$ and the sign $\varepsilon_k$ is defined by
    \[\begin{cases}
        1, &\text{if } E_{k-1}=L,\\
        -1,& \text{if } E_{k-1}=R.
    \end{cases}\]
\end{itemize}

It follows from our construction that we have $K(\mathfrak{C})=K(\ubh(\mathfrak{C}))$, $K(\mathfrak{C})^{\ufv}=K(\ubh(\mathfrak{C}))^{\ufv}$, and $K(\mathfrak{C})^{\fv}=K(\ubh(\mathfrak{C}))^{\fv}$, and the definition of $k[1]$ becomes identical. We will simply denote $K(\mathfrak{C})$ by $K$ below. Note that different chains of $\mathbf{i}$-boxes correspond to different signed words, but not every signed word comes from a chain of $\mathbf{i}$-boxes.

\begin{rem}\label{rem:translation}
Assume $k'[1] = k$ for some $k', k \in K = [1,\ell]$, and denote $k' = k[-1]$. Then, by our constructions, the following properties (1) to (6) are equivalent in each case:\vspace*{-0.8\baselineskip}
\begin{multicols}{2}
\begin{enumerate}
\item $\varepsilon_k = 1$
\item $E_{k-1} = L$
\item $a_k$ is the effective end of $[a_k, b_k]$
\item $\widetilde{a}_k = a_k$
\item $a_k = a_{k[-1]}^-$
\item $b_k = b_{k[-1]}$
\end{enumerate}

\columnbreak

\begin{enumerate}
\item $\varepsilon_k = -1$
\item $E_{k-1} = R$
\item $b_k$ is the effective end of $[a_k, b_k]$
\item $\widetilde{b}_k = b_k$
\item $b_k = b_{k[-1]}^+$
\item $a_k = a_{k[-1]}$
\end{enumerate}
\end{multicols}

\end{rem}

Assume that $b$ is an integer. In the following, we want to show that the matrices $B(\mathfrak{C})$ and $B(\ubh(\mathfrak{C}))$ are equal, thus providing a solution to Kashiwara--Kim--Oh--Park's problem.

To start with, consider the chain of $\mathbf{i}$-boxes $\mathfrak{C}_-^{[a,b]}$. Then the signed word $\ubh(\mathfrak{C}_-^{[a,b]})=(\bh_k)_{k\in [1,l]}$ is given by 
\[\bh_k=i_{b-k+1}.\]
Then the matrix $B(\ubh(\mathfrak{C}_-^{[a,b]}))=(\widetilde{b}_{jk})_{j\in K,k\in K^{\ufv}}$ simplifies to 

\[\widetilde{b}_{jk}=
\begin{cases}
    1 & k=j[1],\\
    -1 & j=k[1],\\
    c_{|\bh_{j}|,|\bh_{k}|}& j<k<j[1]<k[1],\\
    -c_{|\bh_{j}|,|\bh_{k}|}& k<j<k[1]<j[1].
\end{cases}
\]

Therefore, $B(\ubh(\mathfrak{C}_-^{[a,b]}))$ 
coincides with the matrix $B(\mathfrak{C}_-^{[a,b]})$.

\begin{prop}
\label{prop_box_move=flip}
Let $\mathfrak{C}=(\mathfrak{c}_k)_k$ be any chain of $\mathbf{i}$-boxes of range $[a,b]$. Suppose that the matrix $B(\mathfrak{C})$ equals $B(\ubh(\mathfrak{C}))$. Then for any movable $\mathbf{i}$-box $\mathfrak{c}_s$ of $\mathfrak{C}$, we also have $B(\nu_s(\mathfrak{C}))=B(\ubh(\nu_s(\mathfrak{C})))$.
\end{prop}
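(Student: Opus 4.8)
The strategy is to relate the box move $\nu_s$ on chains of $\mathbf{i}$-boxes with the flip (or left reflection, in the case $s=1$) on signed words, and then to invoke the known behaviour of $B(\ubh)$ under flips (the Proposition of Section 3). Concretely, the plan is to prove the following two assertions:
\begin{enumerate}
    \item[(a)] The signed word $\ubh(\nu_s(\mathfrak{C}))$ is obtained from $\ubh(\mathfrak{C})$ by a flip at $s$ when $s\geq 2$, and by a left reflection (possibly composed with nothing else at the level of $B$) when $s=1$.
    \item[(b)] Under this identification, the recursive rule defining $B(\nu_s(\mathfrak{C}))$ from $B(\mathfrak{C})$ — namely $\mu_s$ if $\mathfrak{c}_{s+1}$ and $\mathfrak{c}_s$ have the same color, and $\sigma_{s,s+1}$ otherwise — matches exactly the rule governing $B(\ubh')$ from $B(\ubh)$ under a flip.
\end{enumerate}
Granting (a) and (b), the conclusion is immediate: assuming $B(\mathfrak{C})=B(\ubh(\mathfrak{C}))$, the Proposition of Section 3 gives $B(\ubh(\nu_s(\mathfrak{C})))=\mu_s B(\ubh(\mathfrak{C}))$ or $\sigma_{s,s+1}B(\ubh(\mathfrak{C}))$ according to the color coincidence, and the recursive definition of $B$ on chains gives the identical transformation applied to $B(\mathfrak{C})$; since the inputs agree, the outputs agree.

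For (a), I would argue directly from the dictionary between chains of $\mathbf{i}$-boxes and rooted sequences of expansion operators, together with Remark \ref{rem:translation}. If $\mathfrak{c}_s$ is movable with $s\geq 2$, then $E_{s-1}\neq E_s$, so by the sign rule defining $\ubh(\mathfrak{C})$ the signs $\varepsilon_s$ and $\varepsilon_{s+1}$ are opposite — precisely the condition allowing a flip at $s$. The box move $\nu_s$ exchanges $E_{s-1}\leftrightarrow$ (its opposite) and $E_s\leftrightarrow$ (its opposite), leaving all other $E_k$ and the root $c$ fixed; by Remark \ref{rem_color_exchange_op} this swaps the colors of $\mathfrak{c}_s$ and $\mathfrak{c}_{s+1}$ and fixes all other colors. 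Translating through the definition of $\ubh$, one sees that $\ubh(\nu_s(\mathfrak{C}))$ has $\bh_s$ and $\bh_{s+1}$ interchanged and all other letters unchanged, i.e.\ it is the flip of $\ubh(\mathfrak{C})$ at $s$. The case $s=1$ is analogous but uses the first two clauses of Definition \ref{de_box_move}(ii): $\nu_1$ either sends $c\mapsto c\pm 1$ and flips $E_1$; one checks that the effect on $\ubh$ is to negate $\bh_1$ and swap it with $\bh_2$, which at the level of the matrix $B$ coincides with the composite of a left reflection (trivial on $B$) and a flip, so part (1) of the Proposition in Section 3 handles the sign change and part (2) handles the transposition.

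Step (a) is where the main obstacle lies: one must carefully check that the indexing set $K$, the map $k\mapsto k[1]$, and in particular which entries fall into which case of \eqref{eq:dBS_B_matrix} versus the formula for $B(\mathfrak{C})$, are compatible after the move — that is, that the recursive rule for $B$ on chains is literally the flip rule and not merely the same in the generic case. The bookkeeping is delicate because the box move can change $k[1]$ for $k\in\{s-1,s,s+1\}$ and can move an index in or out of $K^{\ufv}$; Remark \ref{rem:translation} is the right tool, since it packages the six equivalent characterizations of the sign $\varepsilon_k$ that are needed to track exactly these changes. Once this compatibility is established, step (b) is purely formal — it is the observation that the two recursive definitions were set up with the same $\mu_s$/$\sigma_{s,s+1}$ alternative — and the proof is complete. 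I would also remark that, since every two chains of range $[a,b]$ are connected by box moves and $B(\mathfrak{C}_-^{[a,b]})=B(\ubh(\mathfrak{C}_-^{[a,b]}))$ was checked above, Proposition \ref{prop_box_move=flip} together with this base case yields Theorem \ref{thm:box-to-word} by induction on the number of box moves.
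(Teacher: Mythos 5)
Your proposal is correct and follows essentially the same route as the paper's proof: translate the box move at $s\geq 2$ into a flip of $\ubh(\mathfrak{C})$ at $s$ (using $E_{s-1}\neq E_s$ and Remark \ref{rem_color_exchange_op}), handle $s=1$ via a left reflection composed with a flip, and then match the $\mu_s$/$\sigma_{s,s+1}$ alternative on both sides. The ``delicate bookkeeping'' you flag in step (a) is in fact already absorbed by the cited proposition on flips of signed words, so no additional verification of $k[1]$ or of $K^{\ufv}$ is needed beyond what you and the paper both do.
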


\begin{proof}
   Recall that we write $\ubh(\mathfrak{C})=(\bh_k)_k=(\varepsilon_k h_k)$. Denote the sequence of expansion operators associated to $\mathfrak{C}$ by $(E_k)_k$.
   
    Suppose that $s$ is greater than 1. Since $\mathfrak{c}_s$ is movable, we have $E_{s-1}\neq E_{s}$. Then, at the level of the associated signed word, we have $\varepsilon_{s}\neq \varepsilon_{s+1}$. Moreover, by Remark \ref{rem_color_exchange_op}, the signed word associated to $\nu_s(\mathfrak{C})$ is 
 $\ubh(\nu_s(\mathfrak{C}))=(\bh_k')_k,$ where
 \[\bh_k'=
\begin{cases}
    \bh_k & \text{if } k\notin \{s,s+1\}\\
    \bh_{s+1} & \text{if } k=s,\\
    \bh_s & \text{if } k=s+1.
\end{cases}
 \]
 In particular, the signed word $\ubh(\nu_s(\mathfrak{C}))$ is obtained from $\ubh(\mathfrak{C})$ through a flip move at $s$. We have the following two cases
 \begin{enumerate}
     \item If the $\mathbf{i}$-boxes $\mathfrak{c}_{s}$ and $\mathfrak{c}_{s+1}$ have the same color, then $|\bh_s|=|\bh_{s+1}|$. Therefore, the matrices $B(\ubh(\nu_s(\mathfrak{C})))$ and $B(\nu_s(\mathfrak{C}))$ are equal, since they are the mutation of the matrix $B(\ubh(\mathfrak{C}))=B(\mathfrak{C})$ at $s$.
     \item If the $\mathbf{i}$-boxes $\mathfrak{c}_{s}$ and $\mathfrak{c}_{s+1}$ have different color, then $|\bh_s|\neq|\bh_{s+1}|$. Therefore, the matrices $B(\ubh(\nu_s(\mathfrak{C})))$ and $B(\nu_s(\mathfrak{C}))$ are equal, since they are obtained from the matrix $B(\ubh(\mathfrak{C}))=B(\mathfrak{C})$ by applying $\sigma_s$.
 \end{enumerate}

If $s=1$, by Remark \ref{rem_color_exchange_op}, the signed word
 $\ubh(\nu_s(\mathfrak{C}))=(\bh_k')_k$ associated to $\nu_s(\mathfrak{C})$ is given by
 \[\bh_k'=
\begin{cases}
    \bh_k & \text{if } k\geq 3\\
    |\bh_{2}| & \text{if } k=1,\\
    -\varepsilon_2\bh_1 & \text{if } k=2.
\end{cases}
 \]
 Therefore, up to a left reflection which does not change the $B$-matrix, the signed word $\ubh(\nu_s(\mathfrak{C}))$ is obtained from $\ubh(\mathfrak{C})$ through a flip move, and we can conclude as above that $B(\nu_s(\mathfrak{C}))=B(\ubh(\nu_s(\mathfrak{C})))$.
\end{proof}

\begin{proof}[Proof of Theorem \ref{thm:box-to-word}]
     Recall that $\mathfrak{C}$ could be obtained from $\mathfrak{C}^{[a,b]}_-$ by finite many box moves, and $B(\mathfrak{C}^{[a,b]}_-)=B(\ubh(\mathfrak{C}^{[a,b]}_-))$. Applying Proposition \ref{prop_box_move=flip} repeatedly, we obtain $B(\mathfrak{C})=B(\ubh(\mathfrak{C}))$.
\end{proof}

\begin{cor}\label{cor:indepedent}
    The matrix $B(\mathfrak{C})$ does not depend on the choice of the chain transformation sending $\mathfrak{C}_-^{[a,b]}$ to $\mathfrak{C}$.
\end{cor}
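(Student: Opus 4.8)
The final statement, Corollary \ref{cor:indepedent}, asserts that the matrix $B(\mathfrak{C})$ is well-defined, independent of the chain transformation used to build it. The plan is to observe that this independence is an immediate consequence of Theorem \ref{thm:box-to-word}. Indeed, the matrix $B(\ubh(\mathfrak{C}))$ is constructed directly from the signed word $\ubh(\mathfrak{C})$ by the closed formula \eqref{eq:dBS_B_matrix}, together with the passage to the $K\times K^{\ufv}$-submatrix; this construction makes no reference to any chain transformation or sequence of box moves. Since the correspondence $\mathfrak{C}\mapsto \ubh(\mathfrak{C})$ is itself canonical — it reads off the colors of the $\mathbf{i}$-boxes and the expansion operators $(E_k)$, both of which are intrinsic data of $\mathfrak{C}$ — the matrix $B(\ubh(\mathfrak{C}))$ depends only on $\mathfrak{C}$.

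First I would recall that, by the remark preceding Proposition \ref{prop_box_move=flip}, any chain $\mathfrak{C}$ of range $[a,b]$ (with $b\in\mathbb{Z}$) can be reached from $\mathfrak{C}_-^{[a,b]}$ by some finite sequence of box moves, and that the recursive definition of $B(\mathfrak{C})$ in Section \ref{section_iboxes} was made relative to a \emph{choice} of such a sequence $\nu_1,\dots,\nu_N$. A priori, a different sequence $\nu'_1,\dots,\nu'_{N'}$ could yield a different matrix. Theorem \ref{thm:box-to-word} asserts that, for any such choice, $B(\mathfrak{C})=B(\ubh(\mathfrak{C}))$. Since the right-hand side does not involve the chosen sequence of box moves at all, the left-hand side cannot either: any two choices produce the same matrix $B(\ubh(\mathfrak{C}))$, hence the same $B(\mathfrak{C})$.

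Concretely, the proof is the one line: apply Theorem \ref{thm:box-to-word} to each of the two presentations of $\mathfrak{C}$ obtained from the two chain transformations; both give $B(\ubh(\mathfrak{C}))$, which is manifestly intrinsic to $\mathfrak{C}$; hence the two matrices agree. There is essentially no obstacle here — all the work has been done in establishing Theorem \ref{thm:box-to-word}, whose proof decomposed a chain transformation into box moves and matched each box move at $s$ with a flip (or, when $s=1$, a left reflection followed by a flip) of the associated signed word via Proposition \ref{prop_box_move=flip}. The only point worth stating explicitly in the write-up is that $\ubh(\mathfrak{C})$ is determined by $\mathfrak{C}$ alone, which is immediate from its definition in terms of the colors $h_k$ and the signs $\varepsilon_k$ coming from $(E_k)$.
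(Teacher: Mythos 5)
Your argument is correct and is exactly the paper's intended one: the corollary is stated as an immediate consequence of Theorem \ref{thm:box-to-word}, since for any choice of chain transformation the recursive construction yields $B(\ubh(\mathfrak{C}))$, which is defined intrinsically from $\mathfrak{C}$ via its colors and expansion operators. Nothing further is needed.
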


Finally, assume that $b=+\infty$. Consider, for $s\geq 1$, the subchains $\mathfrak{C}_s=(\mathfrak{c}_k)_{1\leq k\leq s}$ of $\mathfrak{C}$. 
As a corollary of Proposition \ref{prop_box_move=flip}, the coefficients of the exchange matrices $B(\mathfrak{C}_s)$ stabilize:
\begin{cor}\label{cor:extension}
$\text{For any }s\geq 1, 
\text{ if } (i,j)\in K(\mathfrak{C}_s)\times K^{\text{ex}}(\mathfrak{C}_s), \text{ then } B(\mathfrak{C}_t)_{ij}=B(\mathfrak{C}_s)_{ij} \text{ for any }t\geq s.$
\end{cor}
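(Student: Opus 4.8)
The plan is to reduce the claim about the stabilization of the matrices $B(\mathfrak{C}_s)$ in the case $b=+\infty$ to the case of a finite range $[a,b]$, using Theorem \ref{thm:box-to-word} together with the local finiteness assumption on $B(\ubh)$. First I would observe that, by Theorem \ref{thm:box-to-word}, for each $s$ we have $B(\mathfrak{C}_s)=B(\ubh(\mathfrak{C}_s))$, and by construction the signed word $\ubh(\mathfrak{C}_s)$ is the truncation $\ubh(\mathfrak{C})_{[1,s]}$ of the signed word $\ubh(\mathfrak{C})$ associated to the full chain. Indeed, a sub-chain of $\mathfrak{C}$ corresponds to truncating the rooted sequence of expansion operators, hence truncating the associated signed word. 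Thus the statement becomes: for a fixed signed word $\ubh$ on $I$ and its truncations $\ubh_{[1,s]}$, the exchange matrix entries $B(\ubh_{[1,s]})_{ij}$ with $(i,j)\in K(\ubh_{[1,s]})\times K^{\ufv}(\ubh_{[1,s]})$ do not change when $s$ grows.

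Next I would inspect the explicit formula \eqref{eq:dBS_B_matrix} defining $\widetilde{b}_{jk}$. The only quantity in that formula that depends on the ambient length $l$ (equivalently, on $s$) is $k[1]$: extending the signed word can only turn a $k$ with $k[1]=+\infty$ into one with a finite $k[1]$, and for a fixed pair $j,k$ with both finite indices, the values $|h_j|$, $|h_k|$, $\varepsilon_j$, $\varepsilon_k$, $j[1]$, $k[1]$ are all determined once $s\ge \max(j[1],k[1])$ (and $j[1],k[1]$ themselves are unchanged once $s$ is large enough that they are already attained). The subtlety is that a column index $k\in K^{\ufv}(\ubh_{[1,s]})$ might a priori not lie in $K^{\ufv}(\ubh_{[1,t]})$ for $t<s$ — but this direction is the easy one: if $k\in K^{\ufv}(\ubh_{[1,s]})$, there is $t\in[k+1,s]$ with $h_t=h_k$, so $k\in K^{\ufv}(\ubh_{[1,t']})$ for all $t'\ge t$, and $K^{\ufv}$ only grows. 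Hence for fixed $(i,j)$ the entry $B(\ubh_{[1,t]})_{ij}$ is defined for all $t\ge\max(i,j)$ large enough and, by the formula, equals $B(\ubh_{[1,s]})_{ij}$ once $t\ge s$ is at least $\max(j[1],k[1])$. Combining with the fact that indices $j[1]$ stabilize (using local finiteness so that $j[1]<+\infty$ is attained at a fixed place), one gets $B(\mathfrak{C}_t)_{ij}=B(\mathfrak{C}_s)_{ij}$ for all $t\ge s$, as claimed. The argument can also be phrased categorically as in \cite{qin2024infinite}: the matrices $B(\ubh_{[1,s]})$ form a direct system and $B(\ubh)$ is its colimit.

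The main obstacle, and the point that needs the most care, is the bookkeeping around the indices $k[1]$ and the sets $K^{\ufv}$: one must check that for a fixed pair of indices the relevant "first repetition to the right" $j[1]$ (and $k[1]$) is eventually constant in $s$, which is exactly where the local finiteness assumption on $B(\ubh)$ is used — without it, a row could acquire infinitely many nonzero entries in the limit and the colimit statement would fail to make literal sense. Once that is granted, every case in \eqref{eq:dBS_B_matrix} is manifestly insensitive to enlarging $s$, because each case is a condition purely on the finite data attached to $j,k,j[1],k[1]$, none of which changes once $s$ is large. So the proof is essentially: truncation of chains $\leftrightarrow$ truncation of signed words (construction), $B(\mathfrak{C}_s)=B(\ubh(\mathfrak{C}_s))$ (Theorem \ref{thm:box-to-word}), and then the formula \eqref{eq:dBS_B_matrix} is stable under truncation/extension on the relevant index range. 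I would write it out in that order, keeping the verification of stability of \eqref{eq:dBS_B_matrix} to a short paragraph since it is a direct inspection.
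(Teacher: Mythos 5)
Your overall route is the intended one: identify $\ubh(\mathfrak{C}_s)$ with the truncation $\ubh(\mathfrak{C})_{[1,s]}$, apply Theorem \ref{thm:box-to-word} to each finite sub-chain (whose range has an integer right end, so the theorem applies), and then read stabilization off the explicit formula \eqref{eq:dBS_B_matrix}. The gap is at the one step that carries the actual content. The corollary asserts stabilization from the moment the column index is exchangeable, i.e.\ as soon as $j[1]\leq s$; it makes no hypothesis on the row index $i$, whose first repetition $i[1]$, computed in $\ubh_{[1,s]}$, may be $+\infty$ and may become a finite value $u>s$ in a later truncation, or may remain $+\infty$ forever. Your verification only covers the regime where both first repetitions are already attained (you argue that all relevant data are determined once the truncation length exceeds $\max$ of the two shifts), which yields eventual stabilization but not the stated threshold $t\geq s$; and the sentence you use to bridge the remaining range --- that by local finiteness the row's first repetition ``is attained at a fixed place'' --- is not correct: local finiteness of $B(\ubh)$ does not force every color to recur (a color may occur exactly once in the infinite word), and in fact local finiteness is not needed for this corollary at all.

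The missing check is short but must be done. Fix $i\leq s$ and $j$ with $j[1]\leq s$, and suppose $i[1]$ changes from $+\infty$ in $\ubh_{[1,s]}$ to some finite $u>s$ in $\ubh_{[1,t]}$ (the only way it can change, since the words agree on $[1,s]$). Run through the cases of \eqref{eq:dBS_B_matrix} for the entry in row $i$, column $j$: the case ``$j=i[1]$'' stays false because $j\leq s<u$; every inequality involving $i[1]$ compares it with $j[1]\leq s<u$, so ``$j[1]<i[1]$'' remains true and ``$i[1]<j[1]$'' remains false before and after the jump; and the only case whose condition reads the sign $\varepsilon_{i[1]}$ requires $i[1]<j[1]$, hence $i[1]\leq s$, so it is unaffected. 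All other data ($\varepsilon_i$, $\varepsilon_j$, $\varepsilon_{j[1]}$, the colors, and $j[1]$ itself) are unchanged under extension, so the entry is literally the same for every $t\geq s$, which is the full statement. With that case analysis inserted, and the appeal to local finiteness removed (it is only needed later, to ensure the colimit matrix is again locally finite), your proof is complete and coincides with the paper's argument.
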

Therefore, we can define the exchange matrix $B(\mathfrak{C})$ as the colimit of the matrices $B(\mathfrak{C}_s)$.
In other words, we have 
\[ B(\mathfrak{C})_{|K(\mathfrak{C}_s)\times K^{\text{ex}}(\mathfrak{C}_s)} = B(\mathfrak{C}_s) \ \ \ \text{for any } s\geq 1.\]

\subsection{Comparison of $B(\ubh(\mathfrak{C}))$ with Kashiwara--Kim's matrix}
\label{subsection_comparison_matrices}
We still write $[a,b]$ for an interval and $\mathfrak{C}=(\mathfrak{c}_k)_{k\in [1,l]}$ for a chain of $\mathbf{i}$-boxes $\mathfrak{c}_k=[a_k,b_k]$ of range $[a,b]$. For $1\leq k\leq l$, $h_k$ is the color of the $i$-box $\mathfrak{c}_k$.
Let $\widetilde{D}$ be the diagonal $l\times l$-matrix with diagonal entries $d_1=d_{h_1},\dots,d_l=d_{h_l}$, where the $(d_i)_{i\in I}$ are the diagonal entries of the minimal symmetrizer of the generalized Cartan matrix $C$. 
Following \cite{kashiwarakim2024exchangematricesiboxes}, 
we define $\widetilde{B}^\KK(\mathfrak{C})=(b^\KK_{kk'})_{k,k'\in K(\mathfrak{C})}$ as the skew-symmetrizable $l\times l$-matrix with skew-symmetrizer $\widetilde{D}$ and whose positive entries are given as follows:
\begin{equation}
\label{eq_entries_KK}
b^{\KK}_{j{k}}=
\begin{cases}
    1 & \text{if } (a_j=a_{k} \text{ and } b_{k}=b_j^-) \text{ or } (b_j=b_k \text{ and } a_{k}=a_j^-),\\
    -c_{h_j,h_{k}} &\text{if $c_{h_j,h_{k}}<0$ and one of the following conditions holds:}
\end{cases}
\end{equation}

\begin{itemize}
    \item[$(a)$]  $[a_j,b_j^+]\in\mathfrak{C},$ $a_j$ is the effective end of $[a_j,b_j]$, $a_{k}^-<a_j<a_{k}<b_{k}<b_j^+<b_{k}^+,$
    \item[$(b)$]  $[a_j,b_j^+]\in\mathfrak{C},$ $b_{k}$ is the effective end of $[a_{k},b_{k}]$, $a_k^-<a_j<b_j<b_{k}<b_j^+<b_{k}^+,$
    \item[$(c)$]  $[a_{k}^-,b_{k}]\in\mathfrak{C},$ $b_{k}$ is the effective end of $[a_k,b_k]$, $a_j^-<a_{k}^-<a_{j}<b_{j}<b_{k}<b_{j}^+,$
    \item[$(d)$]  $[a_{k}^-,b_{k}]\in\mathfrak{C},$ $a_j$ is the effective end of $[a_j,b_j]$, $a_j^-<a_{k}^-<a_{j}<a_{k}<b_{k}<b_j^+$.
\end{itemize}
Let $B^\KK(\mathfrak{C})$ denote the restriction of the matrix $\widetilde{B}(\mathfrak{C})$ to the indexes $K(\mathfrak{C})\times K^\mathrm{ex}(\mathfrak{C})$. When the Cartan matrix is symmetric of finite type and $\ubi$ is of the form $\widehat{\underline{w}}_0$, \cite{kashiwarakim2024exchangematricesiboxes} proved that $B^\KK(\mathfrak{C})=B(\mathfrak{C})$. Then Theorem \ref{thm:box-to-word} implies that $B^\KK(\mathfrak{C})=B(\ubh(\frC))$. In the next proposition, in the general case where $C$ is a generalized Cartan matrix, we compare them directly without using  Theorem \ref{thm:box-to-word}.

\begin{prop}
\label{prop_matrices_equal}
    The matrix $B^\KK(\mathfrak{C})$ is equal to the matrix $B(\ubh(\mathfrak{C}))$. 
\end{prop}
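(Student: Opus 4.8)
The plan is to compare the two matrices entry by entry, using the dictionary between the sign/color data $(\varepsilon_k, h_k)$ of the signed word $\ubh(\mathfrak{C})$ and the interval data $(a_k, b_k)$ of the $\mathbf{i}$-boxes, as recorded in Remark \ref{rem:translation}. Both $\widetilde{B}^\KK(\mathfrak{C})$ and $\widetilde{B}(\ubh(\mathfrak{C}))$ are skew-symmetrizable with the \emph{same} skew-symmetrizer $\widetilde{D}$ (diagonal entries $d_{h_k}$), so it suffices to match the positive entries $b^\KK_{jk} > 0$ with the corresponding entries $\widetilde{b}_{jk}$ of \eqref{eq:dBS_B_matrix}; the negative entries and zero entries then agree automatically, and restricting to $K(\mathfrak{C}) \times K^{\mathrm{ex}}(\mathfrak{C})$ gives $B^\KK(\mathfrak{C}) = B(\ubh(\mathfrak{C}))$.

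First I would handle the "unit" case: $b^\KK_{jk} = 1$ exactly when $(a_j = a_k \text{ and } b_k = b_j^-)$ or $(b_j = b_k \text{ and } a_k = a_j^-)$. Using Remark \ref{rem:translation}, the condition $b_k = b_j^-$ together with $a_j = a_k$ says precisely that $k[-1]$-type relations hold making $j = k[1]$ with $\varepsilon_j = -1$ impossible and in fact forcing $k = j[1]$ with the appropriate sign; more carefully, $b_k = b_j^-$ means $\mathfrak{c}_k$ is the box of color $h_j$ immediately preceding $\mathfrak{c}_j$ in the chain on the right side, which translates to $j = k[1]$, $\varepsilon_j = 1$ (equivalently $E_{j-1} = L$, i.e. $a_j$ is the effective end), giving $\widetilde{b}_{jk} = -\varepsilon_j \cdot(\ldots)$—I need to track the sign conventions so that this yields $\widetilde{b}_{jk} = 1$ as well. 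Symmetrically, $b_j = b_k$ and $a_k = a_j^-$ corresponds to $k = j[1]$, $\varepsilon_k = 1$, giving $\widetilde{b}_{jk} = \varepsilon_k = 1$. The key point is that the pair of equalities in \eqref{eq_entries_KK} is exactly the pair of equivalent conditions (5)--(6) of Remark \ref{rem:translation} applied at $j$ or at $k$.

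Next I would treat the four conditions $(a)$--$(d)$ in the definition of $b^\KK_{jk} = -c_{h_j, h_k}$. Each of these is a nesting/overlapping condition on the intervals $[a_j, b_j], [a_k, b_k]$ together with an "effective end" requirement and a membership requirement ($[a_j, b_j^+] \in \mathfrak{C}$ or $[a_k^-, b_k] \in \mathfrak{C}$). Via Remark \ref{rem:translation}, each such condition translates into: a specified ordering of the indices $j, k, j[1], k[1]$ in $K$, together with a specified value of the relevant sign $\varepsilon_{j[1]}$ or $\varepsilon_{k[1]}$. I expect $(a)$ and $(b)$ (where $[a_j,b_j^+]\in\mathfrak{C}$, i.e. $j[1]$ exists and $\mathfrak{c}_{j[1]} = [a_j, b_j^+]$, so $\varepsilon_{j[1]} = -1$, and $a_j$ effective means $\varepsilon_j = 1$, while $b_k$ effective in $(b)$ means $\varepsilon_k = -1$) to match the third and fourth lines of \eqref{eq:dBS_B_matrix}; and $(c)$, $(d)$ to match the fifth and sixth lines, after swapping the roles of $j$ and $k$ via skew-symmetry. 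Checking that the interval inequalities $a_k^- < a_j < a_k < b_k < b_j^+ < b_k^+$ etc. translate precisely to index inequalities like $j < k < j[1] < k[1]$ is the heart of the argument: $a_j < a_k$ and $b_k < b_j^+$ (with both boxes having their colors appearing between the relevant endpoints) forces $j < k$ in chain order and $k < j[1]$, while $b_j^+ < b_k^+$ gives $j[1] < k[1]$.

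The main obstacle I anticipate is bookkeeping rather than conceptual: making sure the many one-sided conditions (which endpoint is "effective", which of $[a_j,b_j^+]$ or $[a_k^-,b_k]$ lies in the chain, and strict vs. non-strict inequalities at the boundary) line up exactly with the sign constraints $\varepsilon_{j[1]} = \varepsilon_k$, $\varepsilon_k = -\varepsilon_{k[1]}$, etc. in \eqref{eq:dBS_B_matrix}, with no case of $b^\KK$ left unmatched and no spurious case produced. I would organize this as a short lemma establishing the index-ordering dictionary (essentially repackaging Remark \ref{rem:translation} plus the observation that $a_j^- < a_k^- \iff j[-1] < k[-1]$ in the relevant situations), and then the proof of the proposition becomes a finite table-matching. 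A useful consistency check along the way: both matrices must be skew-symmetrizable by $\widetilde{D}$, so verifying that the matched entries respect $d_{h_j} \widetilde{b}_{jk} = -d_{h_k}\widetilde{b}_{kj}$ catches sign errors immediately.
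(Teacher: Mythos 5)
Your overall strategy is the same as the paper's: both matrices share the skew-symmetrizer $\widetilde{D}$, so one compares positive entries, translating Kashiwara--Kim's interval conditions \eqref{eq_entries_KK} into sign-and-ordering conditions on $j,k,j[1],k[1]$ via Remark \ref{rem:translation}. However, two specific steps of your plan would fail as stated. First, you cannot dispose of conditions $(c)$ and $(d)$ ``after swapping the roles of $j$ and $k$ via skew-symmetry'': transposing $(j,k)$ turns a positive entry into a negative one, so neither the list $(a)$--$(d)$ nor the four $-c$-lines of \eqref{eq:dBS_B_matrix} is stable under relabelling $j\leftrightarrow k$ (e.g.\ the $k<j<k[1]<j[1]$ line requires $\varepsilon_{k[1]}=\varepsilon_j=+1$, which is \emph{not} the $j<k<j[1]<k[1]$ line with $j,k$ exchanged, where the signs are $-1$). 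The paper instead needs a genuinely different symmetry --- reversing the order on $\Z$, i.e.\ reading $\mathbf{i}$ and the chain backwards (its Step 3) --- to transport the analysis of $(i),(ii),(b),(c)$ to $(iii),(iv),(a),(d)$; absent that, you must redo the case analysis for $(c),(d)$ by hand. Second, the ``finite table-matching'' you envisage does not exist as a bijection of conditions: the correct implications are $(i)\Rightarrow(b)$, $(ii)\Rightarrow(c)$, $(iii)\Rightarrow(d)$, $(iv)\Rightarrow(a)$ (so your guessed pairing of $(a)$ with the $j<k$ lines and of $(c)$ with the $k<j$ lines is off), the pairs $(a),(b)$ and $(c),(d)$ can hold simultaneously, and in the converse direction a single condition such as $(b)$ does not determine the relative order of $j[1]$ and $k[1]$ nor the sign $\varepsilon_{k[1]}$; one can only show it implies \emph{some} one of the four sign/order conditions, via a nontrivial case analysis that uses the chain axioms (nesting $[\widetilde{a}_s,\widetilde{b}_s]\subset[\widetilde{a}_t,\widetilde{b}_t]$ and the fact that same-colored boxes in a chain are nested). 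That converse analysis is the bulk of the paper's proof and is missing from your plan.

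There is also a small slip in your unit-entry case: $a_j=a_k$ and $b_k=b_j^-$ force $j=k[1]$ (the larger box comes later in the chain) with $\varepsilon_j=-1$ by the right-hand column of Remark \ref{rem:translation} (item $a_j=a_{j[-1]}$), giving $\widetilde{b}_{jk}=-\varepsilon_j=1$; you wrote $k=j[1]$ and $\varepsilon_j=1$. This is repairable, but together with the points above it means the plan needs restructuring: prove that the union of the four sign/order conditions coincides with the union of $(a)$--$(d)$, handling the converse direction by explicit case analysis and using the word-reversal symmetry (not skew-symmetry) to halve the work.
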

\begin{proof}
    Note that $\widetilde{B}^\KK(\mathfrak{C})$ and $\widetilde{B}(\ubh(\mathfrak{C}))$ have the same skew-symmetrizer. It suffices to prove that the positive $(j,k)$-entries are the same, where at least one of $j,k$ belongs to $K^\mathrm{ex}$.

    Let $j,k$ be in $[1,l]$ such that $c_{h_j,h_k}<0$. Recall from \eqref{eq_entries_KK} the conditions $(a),\ (b),\ (c)$ and $(d)$ which characterize such entries. Write $(i)$, $(ii)$, $(iii)$ and $(iv)$ for the conditions
\begin{itemize}
    \item[$(i)$]  $\varepsilon_{j[1]}=\varepsilon_{k}=-1,\ j<k<j[1]<k[1],$
\item[$(ii)$] $\varepsilon_{k}=-\varepsilon_{k[1]}=-1,\ j<k<k[1]<j[1]$,
\item[$(iii)$] $\varepsilon_{k[1]}=\varepsilon_{j}=1,\ k<j<k[1]<j[1]$,
\item[$(iv)$] $ \varepsilon_{j}=-\varepsilon_{j[1]}=1,\ k<j<j[1]<k[1].$
\end{itemize}
If any of these conditions is true, then we have $b_{jk}=-c_{h_j,h_k}$. Moreover, notice that the conditions $(i)$, $(ii)$, $(iii)$ and $(iv)$ are mutually exclusive, while $(a)$ (resp. $(c)$) and $(b)$ (resp. $(d)$) can hold at the same time.

\textbf{Step 1.}
First, assume that $(i)$ holds. Using Remark \ref{rem:translation}, we have that :
    \begin{itemize}
        \item $\varepsilon_k=-1$ is equivalent to $b_{k}$ being  the effective end of $[a_k,b_k]$.
        \item $\varepsilon_{j[1]}=-1$ is equivalent to  $[a_j,b_j^+]$ belonging  to $\mathfrak{C}$, i.e., $\frc_{j[1]}=[a_j,b_j^+]$.
        \item $j<k$ implies $[a_j,b_j]\subset [\ta_k,\ta_k]$. Then we deduce $a_k^-<\ta_k<a_j$.
        \item $j<k$ and $\varepsilon_k=-1$ imply that $b_j<\tb_k=b_k$.
        \item $k<j[1]$ and $\varepsilon_{j[1]}=-1$ imply that $b_k<\tb_{j[1]}=b_j^+$. 
        \item $j[1]<k[1]$ and $\varepsilon_{j[1]}=-1$ implies $b_j^+<b_k^+$. This claim follows from the fact that $b_j^+=b_{j[1]}\leq \tb_{k[1]}$ and from the construction of $\frc_{k[1]}$, which implies that $\tb_{k[1]}\leq b_k^+$. 
    \end{itemize} 
Therefore, $(i)$ implies $(b)$.

Now, assume that $(b)$ holds. Then we have the effective ends $b_k=\tb_k$ and $b_{j[1]}=\tb_{j[1]}=b_j^+$. Moreover, $\frc_{j[1]}=[a_j,b_j^+]$.  We claim that $(b)$ implies one of $(i)$-$(iv)$.

First, we cannot have $j[1]<k$ since $b_j^+=\tb_{j[1]}>\tb_k=b_k$. In addition, we cannot have $k[1]<j$. Otherwise, it would follow that $\frc_{k[1]}\subset [\ta_j,\tb_j]$, so that $b_{k[1]}\leq \tb_j< b_j^+$ and $a_{k[1]}\geq \ta_j> a_j^-$. But since we have either $b_{k[1]}=b_k^+$ or $a_{k[1]}=a_k^-$, both cases lead to contradiction with $(b)$.

If $j<k<j[1]<k[1]$, we are in the case $(i)$.

If $j<k<k[1]<j[1]$, we claim that $\varepsilon_{k[1]}=1$, i.e, we are in the case $(ii)$. To see this, assume $\varepsilon_{k[1]}=-1$. Then $\tb_{k[1]}=b_k^+>b_j^+=\tb_{j[1]}$, which contradicts $k[1]<j[1]$.

If $k<j<k[1]<j[1]$, as before, $k[1]<j[1]$ and $(b)$  imply $\varepsilon_{k[1]}=1$. We claim that $\varepsilon_j=1$, i.e,  we are in the case $(iii)$. To see this, note that $b_j<b_k$ but $j>k$, so $b_j$ is not the effective end of $\frc_j$, i.e, $\varepsilon_j=1$. The claim follows.

If $k<j<j[1]<k[1]$, as before, $j>k$ and $b_j<b_k$ imply $\varepsilon_j=1$. So we are in the case $(iv)$.

\textbf{Step 2.}
Similarly, assume that $(ii)$ holds. We have that:
    \begin{itemize}
        \item $\varepsilon_k=-1$ is equivalent to $b_{k}$ being the effective end of $[a_k,b_k]$.
        \item $\varepsilon_{k[1]}=1$ is equivalent to $[a_k^-,b_k]$ belonging to $\mathfrak{C}$, i.e., $\frc_{k[1]}=[a_k^-,b_k]$.
        \item $j<k$ implies $a_k^-<\ta_k<a_j$.
        \item $j<k$ and $\varepsilon_k=-1$ imply that $b_j<\tb_k=b_k$.
        \item $k[1]<j[1]$ and $\varepsilon_{k[1]}=1$ imply $b_k<b_j^+$ and $a_j^-<a_k^-$. This follows from the fact that $a_k^-=a_{k[1]}\geq \ta_{j[1]}\geq a_j^-$ and $b_k=b_{k[1]}\leq \tb_{j[1]}\leq b_j^+$.
    \end{itemize} 
Therefore, $(ii)$ implies $(c)$.

Now assume that $(c)$ holds. Then we have the effective ends $b_k=\tb_k$ and $a_{k[1]}=\ta_{k[1]}=a_k^-$ . Moreover, $\frc_{k[1]}=[a_k^-,b_k]$.  We claim that $(c)$ implies one of $(i)$-$(iv)$.

First, we cannot have $k[1]<j$. In fact, since $\frc_j=[a_j,b_j]\subset [a_k^-,b_k]=\frc_{k[1]}$, if $j>k[1]$, none of $a_j,b_j$ could be an effective end. In addition, we cannot have $j[1]<k$. Otherwise, it would follow that $\frc_{j[1]}\subset [\ta_k,\tb_k]$, thus $b_{j[1]}\leq \tb_k=b_k$ and $a_{j[1]}\geq \ta_k \pur{>} a_k^-$. Since either $b_{j[1]}=b_j^+$ or $a_{j[1]}=a_j^-$ holds, and both contradict $(c)$, we are led to a contradiction.

If $j<k<k[1]<j[1]$, we are in the case $(ii)$.

If $j<k<j[1]<k[1]$, we claim that $\varepsilon_{j[1]}=-1$, i.e, we are in the case $(i)$. To see this, assume $\varepsilon_{j[1]}=1$. Then $\ta_{j[1]}=a_j^-<a_k^-=\ta_{k[1]}$, which contradicts $j[1]<k[1]$.

If $k<j<j[1]<k[1]$, as before, $j[1]<k[1]$ and $(c)$ imply $\varepsilon_{j[1]}=-1$. Moreover, $b_j<b_k$ and $j>k$ imply that $b_j$ is not the effective end of $\frc_j$, i.e., $\varepsilon_j=1$. So we are in the case $(iv)$. 

If $k<j<k[1]<j[1]$, as before, $j>k$ and $b_j<b_k$ imply $\varepsilon_j=1$. So we are in the case $(iii)$.

\textbf{Step 3.}
Let us deduce the remaining cases from the previous steps. Let $\sigma$ denote the order reversing automorphism on $\Z$ such that $\sigma(x)=-x$. Consider the word $\textbf{i}'=(i'_k):=(i_{\sigma k})$ and the chain of $\textbf{i}'$-boxes $\frC'=(\frc'_s)_{s\in[1,l]}$ such that $\frc'_s:=[a'_s,b'_s]:=[\sigma b_s,\sigma a_s]$. Then, for any $j,k$ in $K$, the signed word $\ubh(\mathfrak{C})$ satisfies the conditions $(i),(ii),(iii)$, or $(iv)$ if and only if, respectively, the signed word $\ubh(\mathfrak{C}')=(\varepsilon_i'h_i)$ satisfies the conditions $(iii),(iv),(i),$ or $(ii)$ with $j$ and $k$ swapped:
\begin{itemize}
    \item[$(iii)$]  $\varepsilon_{j[1]}=\varepsilon_{k}=1,\ j<k<j[1]<k[1],$
\item[$(iv)$] $\varepsilon_{k}=-\varepsilon_{k[1]}=1,\ j<k<k[1]<j[1]$,
\item[$(i)$] $\varepsilon_{k[1]}=\varepsilon_{j}=-1,\ k<j<k[1]<j[1]$,
\item[$(ii)$] $ \varepsilon_{j}=-\varepsilon_{j[1]}=-1,\ k<j<j[1]<k[1].$
\end{itemize}
Similarly, for any $j,k$ in $K$, the chain of $\mathbf{i}$-boxes $\mathfrak{C}$ satisfies the conditions $(a),(b),(c)$, or $(d)$ if and only if, respectively, the chain of $\mathbf{i}$-boxes $\mathfrak{C}'$ satisfies the conditions $(c),(d),(a),$ or $(b)$ with $j$ and $k$ swapped:
\begin{itemize}
    \item[$(c)$]   $[(a')_j^-,(b')_j]\in\mathfrak{C}',$ $(b')_j$ is the effective end of $[(a')_j,(b')_j]$, $(b')_{k}^+ > (b')_j > (b')_{k} > (a')_{k}>(a')_j^->(a')_{k}^-,$
    \item[$(d)$]  $[(a')_j^-,(b')_j]\in\mathfrak{C}',$ $(a')_{k}$ is the effective end of $[(a')_{k},(b')_{k}]$, $(b')_k^+ > (b')_j > (a')_j>(a')_{k}>(a')_j^->(a')_{k}^-,$
    \item[$(a)$]  $[(a')_{k},b_{k}^+]\in\mathfrak{C}',$ $(a')_{k}$ is the effective end of $[(a')_k,(b')_k]$, $(b')_j^+>(b')_{k}^+>(b')_{j}>(a')_{j}>(a')_{k}>(a')_{j}^-,$
    \item[$(b)$]  $[(a')_{k},(b')_{k}^+]\in\mathfrak{C}',$ $(b')_j$ is the effective end of $[(a')_j,(b')_j]$, $(b')_j^+>(b')_{k}^+>(b')_{j}>(b')_{k}>(a')_{k}>(a')_j^-$.
\end{itemize}
Combining with the results in Steps 1,2, we obtain that $(iii)$ implies $(d)$, that $(iv)$ implies $(a)$ and that, if any among $(a)$, and $(d)$ holds, then we are in one of the cases $(i)$-$(iv)$.

Finally, we obtain the desired claim by comparing the explicit formula for the entries of the matrices, using the results of the previous steps. 
\end{proof}

\bibliographystyle{amsalphaURL}		
\bibliography{referenceEprint}    
\end{document}